	\newcommand{\numberlike}[2]{\@namedef{c@#1}{\@nameuse{c@#2}}}
\newcommand{\mynewtheorem}[2]{
	\newtheorem{#1}{#2}[section]
	\labelformat{#1}{#2~##1}
	\numberlike{#1}{lemma}
}
\theoremstyle{plain}
\newtheorem{lemma}{Lemma}[section]
\theoremstyle{definition}
\newcommand{\set}[1]{\left\{\,#1\,\right\}}
\newcommand{\with}{\ \vrule\ }
\newcommand{\defa}{:=}
\newcommand{\NN}{\mathbb{N}}
\newcommand{\RR}{\mathbb{R}}
\newcommand{\ZZ}{\mathbb{Z}}
\newcommand{\sat}[1]{\overline{#1}}
\newcommand{\satQl}{\sat{Q}(\la)}
\newcommand{\lcm}[1]{\mathrm{lcm}(#1)}
\newcommand{\QoQ}{\sat{Q}\setminus Q}
\newcommand{\la}{{\boldsymbol{\lambda}}}
\newcommand{\Fla}{F_{\la}}
\newcommand{\sla}[1]{\sigma_{\la}(#1)}
\newcommand{\slap}[1]{\sigma_{\la'}(#1)}
\newcommand{\vv}{\mathbf{v}}
\newcommand{\eb}{\mathbf{e}}
\newcommand{\pb}{\mathbf{p}}
\newcommand{\qb}{\mathbf{q}}
\newcommand{\de}{\boldsymbol{\delta}}
\newcommand{\qq}[1]{``#1''}
\begin{document}
\title{Polytopal affine semigroups with holes deep inside}
\author{Lukas Katth\"an}
\address{Fachbereich Mathematik und Informatik, Philipps-Universit\"at Marburg}%
\email{katthaen@mathematik.uni-marburg.de}%

\date{\today}
\subjclass[2010]{52B20} %
\keywords{Lattice polytope, normal polytope, affine monoid}%

\begin{abstract}
Given a non-negative integer $k$, we construct a lattice $3$-simplex $P$ with the following property: The affine semigroup $Q_P$ associated to $P$ is not normal, and every element $q \in \sat{Q}_P \setminus Q_P$ has lattice distance at least $k$ above every facet of $Q_P$.
\end{abstract}
\maketitle


\section{Introduction}
Let $P \subset \RR^{n}$ be a lattice polytope, i.e. a polytope whose vertices have integer coordinates.
We consider the affine semigroup $Q = Q_P \subset \ZZ^{n+1}$ generated by the points $\set{ (p, 1) \in \ZZ^{n+1} \with p \in P \cap \ZZ^{n}}$.
Let $\ZZ Q \subset \ZZ^{n+1}$ be the group generated by the elements of $Q$.
We write $\sat{Q} = \ZZ Q \cap \RR_{\geq 0} Q$ for the normalization of $Q$.
Equivalently, $\sat{Q}$ contains all elements of $\ZZ Q$, such that a positive integral multiple is contained in $Q$.
Then $P$ resp. $Q_P$ are called \emph{normal} if $Q_P = \sat{Q}_P$.
It is a much studied question to characterize normal polytopes.
See \cite{brunsgubel} for background information on affine semigroups and normal polytopes.
The reader should be aware that there is a closely related notion of \emph{integrally closed} polytopes. While a lattice polytope $P$ is called normal if $Q_P = \ZZ Q \cap \RR_{\geq 0} Q$, it is called integrally closed if $Q_P = \ZZ^{n+1} \cap \RR_{\geq 0} Q$. In general, it holds that $\ZZ Q \subseteq \ZZ^{n+1}$, but in many cases of interest one has equality. Therefore, the distinction between normality and integrally closedness is sometimes blurred in the literature.
However, in this paper we will mainly consider the normality of polytopes.

There are results that suggest that the normality of $P$ is somehow determined by the \qq{boundary} of $P$, see for example \cite{longedge}.
Therefore, it seems natural to ask if it is enough to consider normality \qq{near the boundary}.
To make this precise, we give some definitions.
We call an element $q \in \QoQ$ a \emph{hole} in $Q$.
The holes come in families of different dimension, cf. \cite{holes}.
For a facet $F$ of $Q_P$, let $\sigma_F: \ZZ Q_P \rightarrow \ZZ$ be the \emph{lattice height} above $F$, i.e. the linear form with $\sigma_F(F) = 0$ and $\sigma_F(\sat{Q}) = \ZZ_{\geq 0}$, cf. \cite[Remark 1.72]{brunsgubel}.
It is enough to consider elements of lattice height at most $1$ in $\sat{Q}$ to detect families of holes of dimension $n$, see \cite[Exercise 4.15]{brunsgubel}.
The main result of the present note is that this observation does not generalize to higher codimension.
\begin{theorem}\label{thm:main}
	For every natural number $k \in \NN$, there exists a $3$-simplex $P = P(k)$, such that the polytopal affine semigroup $Q_{P}$ is not normal, and every hole $q \in \sat{Q}_{P} \setminus Q_{P}$ has a lattice height of at least $k$ above each facet of $Q_{P}$.
\end{theorem}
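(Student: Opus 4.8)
The plan is to give an explicit family $P = P(k)$, depending on a parameter vector $\la$, and to read the holes of $\Qla$ off coordinates adapted to the simplicial cone $\RR_{\geq 0}\Qla$. Writing $g_i \defa (v_i,1) \in \ZZ^4$ for the four vertices of $P$, every lattice point $q$ of the cone has a unique expansion $q = \sum_{i=0}^{3}\alpha_i(q)\,g_i$ with $\alpha_i(q)\in\QQ_{\geq 0}$, its degree (last coordinate) being $\sum_i\alpha_i(q)$. The facet $F_i$ opposite $v_i$ is $\set{\alpha_i = 0}$, and the lattice height above it is the unique positive multiple $\sigma_{F_i} = d_i\,\alpha_i$ that maps $\ZZ\Qla$ onto $\ZZ$. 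Thus the entire statement becomes a statement about the rational coordinates $\alpha_i$ and their denominators $d_i$, and the first task is to choose the $v_i$ in terms of $\la$ so that the $d_i$ are large and explicitly known.

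An empty simplex is useless here: if the only lattice points of $P$ are its vertices, then $\ZZ\Qla = \langle g_0,\dots,g_3\rangle$ and $\sat{\Qla} = \Qla$ is automatically normal. I would therefore include a small number of additional lattice points on the boundary of $P$, with positions dictated by $\la$, chosen so that $\ZZ\Qla$ becomes an explicitly describable finite-index overlattice of $\langle g_0,\dots,g_3\rangle$ while the extra generators only resolve the shallow, near-facet part of the cone. The coset representatives of the finite abelian group $\ZZ\Qla/\langle g_0,\dots,g_3\rangle$ are the lattice points $u$ of the half-open parallelepiped $\Pi \defa \set{\sum_i\alpha_i g_i \with 0\le\alpha_i<1}$, and their coordinates $\alpha_i(u)$ are the fractional parts of explicit linear forms in $\la$.

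Every hole reduces, by subtracting a non-negative integer combination of the $g_i$, to one of these finitely many representatives $u$. Since $g_i \in \Qla$, a coset whose representative lies in $\Qla$ contains no holes at all, so holes occur only among the $u \in (\Pi\cap\ZZ\Qla)\setminus\Qla$, each of which is itself a hole. Because $\alpha_i(u+\sum c_j g_j) = \alpha_i(u)+c_i$ with $c_i\ge 0$ on the cone, the height $\sigma_{F_i}$ is minimized on a coset at its representative. Hence non-normality is the assertion that $(\Pi\cap\ZZ\Qla)\setminus\Qla$ is non-empty, and it suffices to prove $\sigma_{F_i}(u) = d_i\,\alpha_i(u) \ge k$ simultaneously for all four $i$ and for each such minimal hole $u$.

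This last estimate is the heart of the matter and the step I expect to be the main obstacle. Since $\alpha_i(u)\in[0,1)$ it forces $d_i > k$, and more seriously it demands that no minimal hole has any coordinate $\alpha_i(u)$ close to $0$ --- equivalently, that every \emph{shallow} residue, one with some $\alpha_i(u) < k/d_i$, is in fact decomposable and hence not a hole. This is where the arithmetic of $\la$ enters: the coordinates $\alpha_i(u)$ are fractional parts of the form $\{a_i t/N\}$ as $t$ runs over $\ZZ\Qla/\langle g_0,\dots,g_3\rangle$, and I would impose a condition making $\lcm{\lambda_0,\dots,\lambda_3}$ large relative to each $\lambda_j$ so that the residues $t$ realizing a small fractional part are exactly those whose point splits off one of the boundary generators. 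The delicate point is that a genuinely stuck residue must survive, to keep $\Qla$ non-normal, while every shallow residue must be killed on all four sides at once; balancing these opposing demands is what pins down the admissible $\la$, and checking that the surviving hole is deep above all four facets, rather than only three, is the part requiring the most care.
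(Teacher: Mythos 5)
There is a genuine gap: what you have written is a correct and standard \emph{framework}, not a proof. Reducing holes to residues in the half-open parallelepiped $\Pi$, observing that each $\sigma_{F_i}$ is minimized on a coset at its representative, and noting that $\sigma_{F_i}(u)=d_i\alpha_i(u)\geq k$ forces $d_i>k$ --- all of this is sound. But the theorem is an existence statement, and you never produce the object: no explicit vertices, no arithmetic conditions on $\la$, no verification that some residue in $\Pi$ is a hole (non-normality), and no proof that every residue with a small coordinate $\alpha_i(u)$ decomposes. You flag this last step yourself as \qq{the main obstacle} and \qq{the part requiring the most care}; that step \emph{is} the theorem. As stated, nothing rules out that for every choice of $\la$ in your (unspecified) family, some shallow residue is itself a hole, or that no hole survives at all. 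One concrete warning sign: for a simplex whose cone has the required large denominators $d_i>k$ on all four facets, the group $\ZZ Q/\langle g_0,\dots,g_3\rangle$ has order equal to the normalized volume, which must be large, so \qq{a small number of additional lattice points} giving \qq{an explicitly describable} overlattice is already in tension with a case-by-case analysis of the residues.

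For comparison, the paper avoids enumerating residues entirely. It takes the rectangular simplex with vertices $\eb_4$ and $\lambda_i\eb_i+\eb_4$ (so $\ZZ Q=\ZZ^4$ and the volume is $\lambda_1\lambda_2\lambda_3$), and splits the problem by facet. For the three coordinate facets it uses a unimodular map $\alpha(z)=z+\beta(z)\eb_i$ (adapted from Bruns--Gubeladze) that is surjective on holes, strictly increases height above $F_i$, and preserves the other heights; iterating this reduces everything to the single skew facet $\Fla$. For $\Fla$ it imposes pairwise coprimality of the $\lambda_i$, which gives a uniqueness lemma (at most one element of $\satQl$ with prescribed $\sla{\cdot}$-value and all coordinates $<\lambda_i$), and then exhibits explicit elements $\pb+k\de$ and $2\pb+k\de$ realizing every height $1,\dots,\lambda_1+1$ inside $Q(\la)$ --- so no hole can be shallow above $\Fla$ --- together with an explicit element $\qb$ at height $\lambda_1+2$ that provably does not decompose. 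Those two devices (the height-increasing automorphism and the coprimality-based uniqueness argument) are exactly the missing content in your proposal.
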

In other words, there are polytopes $P$, such that all holes of the semigroup $Q_P$ are \qq{deep inside}.
So it is not sufficient to look for holes near the boundary.
Note that this result is trivial if one considers more general affine semigroups that are not polytopal.
One may just take a big normal polytope $P$ and remove a point from its far interior  to obtain a homogeneous affine semigroups with the desired property.

\section{Rectangular Simplices}
The simplices which we will construct in \ref{thm:main} are special cases of the \emph{rectangular simplices} introduced in \cite{BGrect}.
In this section we recall the construction.
Let $\eb_i \in \RR^{n+1}$ denote the $i$-th unit vector. Let $\la = (\lambda_1, \ldots, \lambda_n)$ be a vector of positive integers. We consider the simplex $\Delta = \Delta(\la) \subset \RR^{n+1}$ with vertices 
\begin{align*}
	 \vv_0 &\defa (0, 0, \dotsc, 0, 1) 			&&= \eb_{n+1}, \\
	 \vv_1 &\defa (\lambda_1, 0, \dotsc, 0, 1) &&= \lambda_1 \eb_1 + \eb_{n+1},\\
	 \vv_2 &\defa (0, \lambda_2, \dotsc, 0, 1) &&= \lambda_2 \eb_2 + \eb_{n+1}, \\
	 & \vdots &&\vdots \\
	 \vv_n &\defa (0, \dotsc, 0, \lambda_n, 1) &&= \lambda_n \eb_n + \eb_{n+1}\,.
\end{align*}
Write $Q = Q(\la)$ for the affine semigroup generated associated to $\Delta(\la)$. Note that $\ZZ Q = \ZZ^{n+1}$, because $\eb_{n+1}, \eb_1 + \eb_{n+1}, \dotsc, \eb_{n} +\eb_{n+1} \in Q$.
There are two kinds of facets of $Q$:
\begin{itemize}
	\item The coordinate hyperplanes are facets of $Q$. We denote  the facet defined by the $i$-th coordinate hyperplane by $F_i$. The lattice height $\sigma_i$ above $F_i$ is given by the $i$-th coordinate of a point $q \in \ZZ Q$.
	\item There is one \qq{skew} facet spanned by the vertices $\lambda_i \eb_i + \eb_{n+1}, 1\leq i \leq n$. Let us denote this facet by $\Fla$. The lattice height above this facet is given by the linear form
	\[ \sla{z} \defa L z_{n+1} - \sum_{i=1}^n \frac{L}{\lambda_i} z_i \,,\]
	where $L \defa \lcm{\lambda_1, \dotsc, \lambda_n}$.
\end{itemize}

\section{Reduction to the skew facet}
\noindent In this section, we prove the following result
that allows us to restrict our attention to the facet $F_\la$.
\begin{proposition}\label{prop:redfacet}
	Let $k$ be a positive integer. Assume that $Q(\la)$ is not normal and every hole has lattice height at least $k$ above $F_\la$.
	Assume further that $Q(\la)$ has no holes in its boundary.
	Then there exists a $\la'$ such that $Q(\la')$ is not normal and its holes have lattice height at least $k$ above every facet.
\end{proposition}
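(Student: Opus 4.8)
The plan is to take $\la'$ to be a uniform scaling of the base, $\la' := c\la = (c\lambda_1, \dotsc, c\lambda_n)$ for a sufficiently large integer $c \ge k$, and to follow the holes through the diagonal map $M := \mathrm{diag}(c, \dotsc, c, 1) \colon \ZZ^{n+1} \to \ZZ^{n+1}$. Since $M$ carries the vertices of $\Delta(\la)$ to those of $\Delta(\la')$, it maps the cone $\RR_{\ge 0} Q(\la)$ onto $\RR_{\ge 0} Q(\la')$ and maps $Q(\la)$ into $Q(\la')$. Writing $L := \lcm{\lambda_1, \dotsc, \lambda_n}$ and using $\lcm{c\lambda_1, \dotsc, c\lambda_n} = cL$, a one-line computation gives, for all $z \in \ZZ^{n+1}$,
\[ \sigma_i(Mz) = c\,\sigma_i(z) \qquad\text{and}\qquad \slap{Mz} = cL\,z_{n+1} - \sum_{i=1}^n \frac{L}{\lambda_i}(c z_i) = c\,\sla{z}. \]
So $M$ rescales all $n+1$ lattice heights by the single factor $c$; this is what lets the lone hypothesis on the height above $\Fla$ be spread over every facet.

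The easy half follows at once. Let $q$ be a hole of $Q(\la)$. By hypothesis $\sla{q} \ge k$, and since $Q(\la)$ has no holes in its boundary we have $\sigma_i(q) = q_i \ge 1$ for each $i$. The identities above give $\sigma_i(Mq) = c q_i \ge c \ge k$ and $\slap{Mq} = c\,\sla{q} \ge ck \ge k$, so $Mq$ sits at lattice height at least $k$ above every facet of $Q(\la')$. Thus the proposition reduces to two claims: (i) $Q(\la')$ is not normal, for which it suffices that $Mq$ is still a hole; and (ii) \emph{every} hole of $Q(\la')$ is deep, i.e. has height $\ge k$ above each facet.

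The obstacle — and the part I expect to require real work — is that $M$ is far from unimodular ($\det M = c^n$), so $\RR_{\ge 0} Q(\la')$ contains many lattice points outside $M(\ZZ^{n+1})$, namely those whose first $n$ coordinates are not all divisible by $c$. A priori these could be shallow holes, or could supply new decompositions that express $Mq$ and kill the non-normality. To control them I would argue by a peeling argument localized at each facet. Scaling the base by $c$ makes $\Delta(\la')$ long in every coordinate direction: the cross sections of the cone transverse to $F_i$ change only by the factor $1 - \frac{z_i}{c\lambda_i}$, hence are essentially constant over a band of width $k \ll c\lambda_i$. Concretely, for $z \in \RR_{\ge 0}Q(\la') \cap \ZZ^{n+1}$ with $0 < z_i < k$ one peels off suitable height-one generators with $i$-th coordinate equal to $1$, chosen so as to stay inside the cone, until reaching the face $Q(\la') \cap F_i$. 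That face is again a rectangular simplex of lower dimension; for the case $n = 3$ of the theorem it is a lattice polygon and hence automatically normal, while in general one invokes the hypothesis that $Q(\la)$ has no holes in its boundary. Normality of the face lets the reduced point decompose, whence so does $z$; this forces $z_i \ge k$ for every hole and gives (ii) for the coordinate facets.

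It remains to deal with $\Fla$ and with the survival of a hole. Once the coordinate facets are cleared, the surviving holes live where all $z_i$ are large, and here the aim is to use the height identity $\slap{Mz} = c\,\sla{z}$ together with a descent along $M$ to match them with the holes of $Q(\la)$, transporting the bound $\sla{\,\cdot\,} \ge k$ to $\slap{\,\cdot\,} \ge ck \ge k$ and simultaneously certifying that $Mq$ cannot be written as a sum of generators, since its preimage is a genuine hole of $Q(\la)$. The delicate point throughout is that the peeling must respect all facets at once, so that approaching one facet never violates positivity against another; the largeness of $c$, which keeps the transverse cross sections nearly constant over a band of width $k$, is precisely what buys the room to do so, and it is here that I expect the main technical effort to go.
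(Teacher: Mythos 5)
Your choice of $\la' = c\la$ has a fatal flaw: the simplex $\Delta(c\la)$ is the $c$-th dilation of $\Delta(\la)$ (its base is $c$ times the base of $\Delta(\la)$), and by the theorem of Bruns--Gubeladze--Trung and Ewald--Wessels (see \cite[Theorem 2.52 ff.]{brunsgubel}), the $c$-th dilation of a $d$-dimensional lattice polytope is integrally closed for every $c \geq d-1$. Since $\ZZ Q(\la') = \ZZ^{n+1}$ here, integrally closed coincides with normal. So for the relevant case $n = 3$ and any $c \geq 2$ --- in particular for any $c \geq k$ with $k \geq 2$ --- the semigroup $Q(c\la)$ is normal and has no holes whatsoever, and the required conclusion that $Q(\la')$ is not normal is simply false for your $\la'$. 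The danger you yourself flag, that the new lattice points of $\Delta(c\la)$ outside $M(\ZZ^{n+1})$ ``could supply new decompositions that express $Mq$ and kill the non-normality,'' is exactly what happens, and no peeling argument near the facets can repair this: the obstruction is not near the boundary but global. The computation $\slap{Mz} = c\,\sla{z}$ is correct but irrelevant once every hole has been filled.

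The paper avoids dilation entirely. It fixes one index $i$, sets $\ell = \lcm{\lambda_1,\dotsc,\lambda_{i-1},\lambda_{i+1},\dotsc,\lambda_n}$, and replaces only $\lambda_i$ by $\lambda_i + \ell$; the holes are transported by the \emph{unimodular} map $\alpha(z) = z + \beta(z)\eb_i$, where $\beta$ is a linear form that is nonnegative on $Q(\la)$ and strictly positive on holes. Unimodularity is the key point your $M$ lacks: $\alpha$ is a bijection of $\ZZ^{n+1}$, preserves the height above every facet except $F_i$, strictly increases the height above $F_i$ on holes, and (this is the technical heart, using \ref{lem:decrease} and the no-boundary-holes hypothesis) is surjective onto the holes of $Q(\la')$, so no new shallow holes can appear. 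Iterating over $i = 1, \dotsc, n$ then pushes all holes to height at least $k$ above every coordinate facet while the height above $\Fla$ is preserved. If you want to salvage your write-up, you would have to replace the diagonal scaling by a lattice-preserving transformation of this kind.
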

The idea for the proof is taken from \cite[Theorem 1.6]{BGrect}. For a fixed index $1 \leq i \leq n$, set $\ell = \lcm{\lambda_1, \dotsc, \lambda_{i-1}, \lambda_{i+1}, \dotsc, \lambda_n}$. We define $\la' = (\lambda'_1, \dotsc, \lambda'_n)$ by
\[
	\lambda_j' = \begin{cases}
		\lambda_j & \text{ if } j \neq i\,; \\
		\lambda_j + \ell & \text{ if } j = i\;. \\
	\end{cases}
\]
Theorem 1.6 of \cite{BGrect} states that in this situation $Q(\la)$ is normal if and only if $Q(\la')$ is normal. We modify the argument given in \cite{BGrect} to obtain the following result.
\begin{lemma}\label{lem:increase}
	Use the notation as above. Assume that $Q(\la)$ has no holes in its boundary.
	Then there is a bijective linear map $\alpha: \ZZ^{n+1} \rightarrow \ZZ^{n+1}$, such that the preimage of every hole in $Q(\la')$ is a hole in $Q(\la)$ (i.e. $\alpha$ is surjective on holes).
	Moreover, $\alpha$ strictly increases the lattice height of every hole above the facet $F_i$, and it preserves all other lattice heights.
	In particular, $Q(\la')$ also has no holes in its boundary.
\end{lemma}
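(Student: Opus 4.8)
The plan is to realize $\alpha$ as an explicit unimodular shear in the $i$-th coordinate. Since $\ell=\lcm{\lambda_1,\dotsc,\lambda_{i-1},\lambda_{i+1},\dotsc,\lambda_n}$, the numbers $\ell/\lambda_j$ are integers for $j\neq i$, so the map
\[
	\alpha(z)\defa z+\Bigl(\ell\, z_{n+1}-\sum_{j\neq i}\frac{\ell}{\lambda_j}\,z_j\Bigr)\eb_i
\]
has integer coefficients and fixes every coordinate except the $i$-th. Its matrix is triangular with ones on the diagonal, hence $\alpha\colon\ZZ^{n+1}\to\ZZ^{n+1}$ is a bijective linear map. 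As $\alpha$ leaves the coordinates $z_j$ ($j\neq i$) unchanged, it automatically preserves the lattice heights $\sigma_j$ above the facets $F_j$ with $j\neq i$.

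Next I would verify the two remaining height statements by direct computation. For the facet $F_i$ one finds
\[
	\alpha(z)_i-z_i=\frac{\ell}{L}\,\sla{z}+\frac{\ell}{\lambda_i}\,z_i ,
\]
which is $\geq 0$ on $\satQl$ and, since $Q(\la)$ has no holes in its boundary, is \emph{strictly} positive on every hole (a hole must lie in the interior, so both $\sla{z}>0$ and $z_i>0$). For the skew facet the key arithmetic input is $\gcd(\lambda_i,\ell)=\gcd(\lambda_i+\ell,\ell)$, which gives $L\,\lambda_i'=L'\,\lambda_i$; plugging $\alpha(z)$ into $\sigma_{\la'}$ and simplifying then yields $\slap{\alpha(z)}=\sla{z}$, so the height above the skew facet is preserved. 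This establishes the height claims.

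It remains to show that $\alpha$ is a bijection between the holes of $Q(\la)$ and those of $Q(\la')$. One inclusion is easy: a computation analogous to the one above shows that for a lattice point $(p,1)$ of $\Delta(\la)$ one has $\alpha(p,1)_i\geq p_i(1+\ell/\lambda_i)\geq 0$, so $\alpha$ carries the generators of $Q(\la)$ to generators of $Q(\la')$ and hence $\alpha\bigl(Q(\la)\bigr)\subseteq Q(\la')$. Together with the height computation, which shows $\alpha\bigl(\satQl\bigr)\subseteq\sat{Q}(\la')$, this proves that $\alpha$ sends holes to holes and that the preimage of any $q'\in\sat{Q}(\la')\setminus Q(\la')$ is not contained in $Q(\la)$.

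The main obstacle is surjectivity onto holes, i.e. showing that for every hole $q'$ of $Q(\la')$ the preimage $q\defa\alpha^{-1}(q')$ actually lies in $\satQl$; since all other heights are preserved, the only thing that can fail is $\sigma_i(q)=q_i\geq 0$. I would argue by contradiction: if $q_i<0$, translate by $m\defa-q_i>0$ copies of $\eb_i$ to the point $\tilde q\defa q+m\eb_i$ on the facet $F_i$, which one checks still lies in $\satQl$. The plan is to show that $\tilde q$ is a hole of $Q(\la)$ lying in its boundary, contradicting the hypothesis. The delicate step, and the one I expect to require the most care, is transferring non-membership from $q'$ to $\tilde q$: this should be done using the relation $\lambda_i'\,\eb_i=\vv_i'-\vv_0'$, which lets one pass between factorizations differing by exchanging the generator $\vv_0'$ for $\vv_i'$, so that membership in $Q(\la')$ can be tracked along the $\eb_i$-direction and pulled back through $\alpha$. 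Once surjectivity is established, the facet structure of $Q(\la')$ is forced to match that of $Q(\la)$ away from $F_i$, and the absence of boundary holes in $Q(\la')$ follows from the same correspondence.
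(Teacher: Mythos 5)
Your construction of $\alpha$, the height computations via $\tfrac{L}{\lambda_i}=\tfrac{L'}{\lambda_i'}$, the inclusion $\alpha(Q(\la))\subseteq Q(\la')$ through the degree-one generators, and the reduction of surjectivity to the single bad case $q_i<0$ all coincide with the paper's argument. The genuine gap is in the mechanism you propose for the decisive step. You want to transfer non-membership from $q'$ to $\tilde q\defa q+(-q_i)\eb_i$; in the contrapositive this means showing that $\tilde q\in Q(\la)$ (which is automatic once $\tilde q\in\satQl$, since $\tilde q_i=0$ and there are no boundary holes) forces $q'\in Q(\la')$ via $\alpha(\tilde q)\in Q(\la')$. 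You propose to bridge the gap between $\alpha(\tilde q)$ and $q'$ by exchanging the generators $\vv_0'$ and $\vv_i'$ through the relation $\lambda_i'\eb_i=\vv_i'-\vv_0'$. That relation only moves a point along the $\eb_i$-direction in steps of size $\lambda_i'$, whereas $\alpha(\tilde q)$ and $q'$ differ in the $i$-th coordinate by $\beta(q')-q_i'$, an integer strictly between $0$ and $\beta(q')$ that has no reason to be a multiple of $\lambda_i'$. The tool that actually works is the coordinate down-set property of rectangular simplices (\ref{lem:decrease}): since $0\leq q_i'\leq\alpha(\tilde q)_i$ and all other coordinates and the degree agree, $\alpha(\tilde q)\in Q(\la')$ already implies $q'\in Q(\la')$; this is proved by decomposing into degree-one summands and decreasing single coordinates of the summands, i.e., by steps of size $1$, not $\lambda_i'$.

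A second, smaller point you assert without justification is that $\tilde q$ still lies in $\satQl$. Nonnegativity of the coordinates is clear, but $\sla{\tilde q}=\sla{q}+\tfrac{L}{\lambda_i}q_i$ is $\sla{q}$ \emph{minus} a positive quantity (as $q_i<0$), so it is not automatic that it stays nonnegative. One has to observe that $\sla{q}+\tfrac{L}{\lambda_i}q_i=\tfrac{L}{\ell}\beta(q)=\tfrac{L}{\ell}\beta(q')$ and that $\beta(q')=\tfrac{\ell}{L'}\bigl(\slap{q'}+\tfrac{L'}{\lambda_i'}\sigma_i(q')\bigr)\geq 0$ because $q'\in\sat{Q}(\la')$; this is exactly the computation the paper performs on the $\la'$ side, where the inequality $\lambda_i'=\lambda_i+\ell>\ell$ enters. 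Apart from these two points the proposal is correct and follows the paper's route.
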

We can iterate this construction to increase the lattice height of the holes above every facet except $F_\la$. This proves \ref{prop:redfacet}. The map $\alpha$ is taken from the proof of Theorem 1.6 in \cite{BGrect}; we give its definition below.
For the proof of \ref{lem:increase}, we need the following lemma.
\begin{lemma}\label{lem:decrease}
	Let $z \in Q(\la)$, $\tilde{z} \in \ZZ^{n+1}$ with $0 \leq \tilde{z}_i \leq z_i$ for $1 \leq i \leq n$ and  $\tilde{z}_{n+1} = z_{n+1}$. Then $\tilde{z} \in Q(\la)$.
\end{lemma}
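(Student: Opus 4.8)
The plan is to reduce the statement to the single elementary step of decrementing one spatial coordinate by one, and then to iterate. Concretely, it suffices to establish the following claim: if $w \in Q(\la)$ and $w_i \geq 1$ for some index $1 \leq i \leq n$, then $w - \eb_i \in Q(\la)$. Granting this, one passes from $z$ to $\tilde{z}$ by lowering the first coordinate step by step from $z_1$ down to $\tilde{z}_1$, then the second from $z_2$ to $\tilde{z}_2$, and so on; every intermediate point stays in $Q(\la)$ by the claim, and since $\eb_i$ has vanishing last entry for $1 \leq i \leq n$, the height coordinate $z_{n+1}$ is never disturbed, as required.

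The crucial observation behind the claim is that the generating set of $Q(\la)$ is closed under decrementing a positive coordinate. The generators are precisely the points $(p,1)$ with $p$ a lattice point of the simplex $\set{x \in \RR^n_{\geq 0} \with \sum_{j} x_j / \lambda_j \leq 1}$. If $p$ lies in this simplex and $p_i \geq 1$, then $p - \eb_i$ still has non-negative entries, and $\sum_j (p - \eb_i)_j / \lambda_j = \sum_j p_j / \lambda_j - 1/\lambda_i \leq 1$, so $(p - \eb_i, 1)$ is again a generator. This \qq{downward-closedness} is the only geometric input needed.

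To prove the claim itself, I would write $w$ as a sum of generators counted with multiplicity, $w = \sum_{m} (p^{(m)}, 1)$, where the number of summands equals $w_{n+1}$ because each generator contributes exactly $1$ to the last coordinate. Since $w_i = \sum_m p^{(m)}_i \geq 1$, at least one summand satisfies $p^{(m)}_i \geq 1$. Replacing that single generator $(p^{(m)}, 1)$ by $(p^{(m)} - \eb_i, 1)$ — which is again a generator by the previous paragraph — produces a representation of $w - \eb_i$ as a sum of generators, whence $w - \eb_i \in Q(\la)$.

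There is no serious obstacle in this argument; it is essentially a bookkeeping exercise. The only point demanding care is verifying that decrementing a spatial coordinate neither ejects the point from the simplex nor alters the height, and this is exactly the downward-closedness recorded above. I expect the cleanest writeup to isolate that closure property as the heart of the matter and then finish by the one-line pigeonhole choice of a summand with $p^{(m)}_i \geq 1$.
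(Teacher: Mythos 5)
Your proof is correct and follows essentially the same route as the paper: write $z$ as a sum of degree-$1$ generators and use the downward-closedness of the lattice points of $\Delta(\la)$ to decrease coordinates of the summands. Your version merely spells out the bookkeeping (one unit at a time, with a pigeonhole choice of summand) that the paper leaves implicit.
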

\begin{proof}
	We first note that the statement holds if $z_{n+1}=1$.
	This follows from the definition of the simplex $\Delta(\la)$.
	In general, $z$ can be written as a sum of elements of degree $1$. For each summand, we may decrease its components without leaving $Q(\la)$.
	This way, we obtain a representation of $\tilde{z}$ as a sum of degree $1$ elements.
	Hence, $\tilde{z} \in Q(\la)$.
\end{proof}

\begin{proof}[Proof of \ref{lem:increase}]
	Set $L = \lcm{\lambda_1, \dotsc, \lambda_n}$ and $L' = \lcm{\lambda_1', \dotsc, \lambda_n'}$. 
	Note that
	\begin{equation}\label{eq:LLp}
	\frac{L}{\lambda_i} = \frac{\ell}{\gcd(\ell, \lambda_i)} = \frac{\ell}{\gcd(\ell, \lambda_i')} = \frac{L'}{\lambda_i'},
	\end{equation}
	because $\gcd(a,b)=\gcd(a,b+a)$ for all $a,b \in \ZZ$.
	Recall that
	\[ \sla{z} = L z_{n+1} - \sum_{j=1}^n \frac{L}{\lambda_j} z_j \]
	and analogously for $\la'$. We consider the linear form
	\[
		\beta(z) := \frac{\ell}{L}\left(\sla{z}+\frac{L}{\lambda_i}\sigma_i(z)\right) = \ell z_{n+1} - \sum_{\substack{j=1 \\ j\neq i}}^n \frac{\ell}{\lambda_j} z_j \,.
	\]
	defined on $\ZZ^{n+1}$. Note that $\beta$ takes non-negative integer values on $Q(\la)$.
	Using \eqref{eq:LLp}, it is not difficult to verify that 
	\begin{equation}\label{eq:slaslap}
	\slap{z} = \sla{z} +\frac{L}{\lambda_i}\beta(z)
	\end{equation}
	The map $\alpha$ mentioned above can then be defined by $\alpha(z) \defa z + \beta(z) \eb_i$.
	Using \eqref{eq:LLp} and \eqref{eq:slaslap}, one directly verifies that $\slap{\alpha(z)} = \sla{z}$ for every $z \in \ZZ^{n+1}$.
	It follows that $\alpha$ preserves the height above every facet except $F_i$.
	Since $Q(\la)$ has no holes in its boundary, every hole $z$ has $\sla{z} > 0$, so $\beta(z) > 0$ and the height of $\alpha(z)$ above $F_i$ is strictly larger than the height of $z$.
	
	It remains to show that $\alpha$ is surjective on holes. As a preparation, we show that $\alpha(Q(\la)) \subset Q(\la')$.
	We first note that it follows from the discussion above that $\alpha(\sat{Q}(\la)) \subset \sat{Q}(\la')$.
	Next, consider an element $w\in Q(\la)$. It can be written as a sum of elements of degree $1$.
	Since $\alpha$ preserves the degree, this yields a representation of its image $\alpha(w)$ as a sum of degree $1$ elements of $\sat{Q}(\la')$.
	But $Q(\la')$ coincides with $\sat{Q}(\la')$ in degree $1$, hence $\alpha(w) \in Q(\la')$.
		
	Let $z' \in \sat{Q}(\la')\setminus Q(\la')$ be a hole and set $z \defa \alpha^{-1}(z')$. We need to show that $z$ is a hole of $Q(\la)$. It is immediate that $z \notin Q(\la)$, because otherwise $z' = \alpha(z) \in Q(\la')$.
	It remains to show that $z \in \sat{Q}(\la)$, so assume the contrary. Then $z_i < 0$, or equivalently, $z_i' < \beta(z)$. Let $\tilde{z}' \defa z' + (\beta(z)-z_i') \eb_i$, so $\tilde{z}_i = \beta(z)$.
	The linear form $\beta$ does not depend on $z_i$ nor on $\lambda_i$, therefore
	\[
	\beta(z') = \beta(z) = \frac{\ell}{L'}\left(\slap{z'}+\frac{L'}{\lambda_i'}\sigma_i(z')\right)
	\]
	\noindent Using this, we compute
	\begin{align*}
		\slap{\tilde{z}'} &= \slap{z'} + \frac{L'}{\lambda_i'}(z_i' - \beta(z')) \\
		&= \left( \frac{L'}{\ell} - \frac{L'}{\lambda_i'}\right) \beta(z') \\
		&\geq 0
	\end{align*} 
	Here we used that $\lambda_i' = \lambda_i + \ell > \ell$. It follows that $\tilde{z}' \in \sat{Q}(\la')$.
	
	Set $\tilde{z} \defa \alpha^{-1}(\tilde{z}')$. By construction, $\tilde{z}_i = 0$ and $\tilde{z} \in \sat{Q}(\la)$. For this, remember that $\sla{\tilde{z}} = \slap{\tilde{z}'}$.
	But we assumed that $Q(\la)$ has no holes in its boundary, thus $\tilde{z} \in Q(\la)$.
	It follows that $\tilde{z}' = \alpha(\tilde{z}) \in \alpha(Q(\la)) \subset Q(\la')$.
	But now \ref{lem:decrease} implies that $z' \in Q(\la')$, a contradiction.
\end{proof}

\section{Good triples}
In this section, we present our choice of the parameters $\la$.
First, we show that for $3$-dimensional rectangular simplices one of the hypotheses of \ref{prop:redfacet} is always satisfied.
\begin{lemma}
	A $3$-dimensional rectangular simplex $Q(\lambda_1, \lambda_2, \lambda_3)$ has no holes in its boundary.
\end{lemma}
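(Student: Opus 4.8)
The plan is to reduce the statement to the classical fact that every lattice polygon is normal. A hole lying in the boundary of $Q(\la)$ would lie on one of its facets, so it suffices to show that every facet $F$ is free of holes, meaning $\sat{Q}(\la) \cap F \subseteq Q(\la)$. Since $\ZZ Q(\la) = \ZZ^{4}$ and $\sat{Q}(\la) = \ZZ^{4} \cap \RR_{\geq 0} Q(\la)$, for a facet $F$ with supporting hyperplane $H$ we have $\sat{Q}(\la) \cap H = \ZZ^{4} \cap F$, i.e. the set of \emph{all} integer points of the ambient lattice lying in the facet cone. Thus the claim for $F$ amounts to saying that the corresponding two-dimensional face $T$ of $\Delta(\la)$ is integrally closed with respect to the induced lattice $\ZZ^{4} \cap \mathrm{span}(F)$.

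First I would dispose of the three coordinate facets. The facet $F_3$, say, is the cone over the triangle with vertices $\vv_0, \vv_1, \vv_2$, which is a two-dimensional rectangular simplex lying in the coordinate subspace $\set{z_3 = 0}$; here the induced lattice is simply $\ZZ^{4} \cap \set{z_3 = 0} \cong \ZZ^{3}$, so the embedding plays no role and we are dealing with an honest lattice polygon. The same applies verbatim to $F_1$ and $F_2$.

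The remaining and main case is the skew facet $\Fla$, namely the triangle $T$ with vertices $\vv_1, \vv_2, \vv_3$. This is again a two-dimensional lattice polytope, now with respect to the induced lattice $M \defa \ZZ^{4} \cap \mathrm{span}(\Fla)$, which has rank $3$ because $\vv_1, \vv_2, \vv_3 \in \ZZ^{4}$ span a rational three-dimensional subspace. I would then invoke the classical result (see \cite{brunsgubel}) that every lattice polygon is integrally closed: it admits a triangulation into empty lattice triangles, each of which is unimodular by Pick's theorem, and a unimodularly triangulated polytope is integrally closed. Applying this to $T$ inside $M$ yields $M \cap \RR_{\geq 0} T = Q(\la) \cap \Fla$, and since $\sat{Q}(\la) \cap \Fla = \ZZ^{4} \cap \RR_{\geq 0} T = M \cap \RR_{\geq 0} T$, we conclude $\sat{Q}(\la) \cap \Fla \subseteq Q(\la)$, as desired.

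The step I expect to require the most care is not any hard computation but the bookkeeping with lattices in the skew case: one must be sure that the lattice with respect to which $T$ is integrally closed is exactly the full induced lattice $M = \ZZ^{4} \cap \mathrm{span}(\Fla)$, so that it matches $\sat{Q}(\la) \cap \Fla = \ZZ^{4} \cap \RR_{\geq 0} T$, rather than the possibly smaller lattice generated by the lattice points of $T$ alone. Using the integrally closed formulation of the polygon result (instead of mere normality) circumvents this issue, since integral closedness compares directly against the ambient lattice $\ZZ^{4}$.
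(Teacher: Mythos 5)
Your proposal is correct and follows essentially the same route as the paper: reduce to the facets, observe that each facet is a two-dimensional polytopal affine semigroup, and invoke the classical fact that lattice polygons are integrally closed in the ambient lattice (the paper simply cites \cite[Corollary 2.54]{brunsgubel} for this, where you unpack it via unimodular triangulations). Your extra care with the induced lattice on the skew facet is exactly the point the ``integrally closed'' formulation of the cited result is designed to handle.
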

\begin{proof}
	The facets are $2$-dimensional polytopal affine semigroups. Thus, they are normal and even integrally closed in the ambient lattice $\ZZ^4$ (cf. \cite[Corollary 2.54]{brunsgubel}). Hence, $Q(\lambda_1, \lambda_2, \lambda_3)$ has no holes in its boundary.
\end{proof}
\noindent It is now sufficient to find $(\lambda_1, \lambda_2, \lambda_3)$ such that the distance of the holes to the facet $\Fla$ is bounded below. This is achieved with the following class of triples.
\begin{definition}
	Let $\lambda_1 \leq \lambda_2 \leq \lambda_3$ be positive integers and let $\de \defa (-1,2,-1,0) \in \ZZ^4$. We call $\la = (\lambda_1, \lambda_2, \lambda_3)$ a \emph{good triple} if the following conditions are met: 
	\begin{enumerate}
		\item $\lambda_1, \lambda_2$ and $\lambda_3$ are pairwise coprime;
		\item $\sla{\de} = 2$, i.e. $\lambda_2 \lambda_3 - 2 \lambda_1 \lambda_3 + \lambda_1 \lambda_2 = 2$;
		\item $\lambda_1 +2 < \lambda_2$.
	\end{enumerate}
\end{definition}
\noindent The following can be verified directly.
\begin{proposition}\label{r:exgoodtriple}
	Let $\lambda_1 \geq 5$ be an odd positive integer. Then $(\lambda_1, 2 \lambda_1 - 1, 2 \lambda_1^2-\lambda_1-2)$ is a good triple.
\end{proposition}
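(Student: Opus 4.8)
The plan is to verify the three defining conditions of a good triple directly by substituting $\lambda_1 = a$ (an odd integer $\geq 5$), $\lambda_2 = 2a-1$, and $\lambda_3 = 2a^2 - a - 2$. Before checking the numbered conditions, I would first confirm the ordering hypothesis $\lambda_1 \leq \lambda_2 \leq \lambda_3$: since $a \geq 5$, we have $2a - 1 > a$ and $2a^2 - a - 2 > 2a - 1$ (the latter because $2a^2 - 3a > 0$ for $a \geq 2$), so the ordering holds with strict inequalities.

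For condition (3), I would note that $\lambda_2 - \lambda_1 = (2a-1) - a = a - 1 \geq 4 > 2$, so $\lambda_1 + 2 < \lambda_2$ holds immediately for $a \geq 5$. For condition (2), I would simply expand $\lambda_2 \lambda_3 - 2\lambda_1\lambda_3 + \lambda_1\lambda_2$ as a polynomial in $a$: compute $(2a-1)(2a^2-a-2)$, subtract $2a(2a^2-a-2)$, and add $a(2a-1)$, then collect terms and check that everything cancels to leave the constant $2$. This is a routine polynomial identity verification, so I would not expect difficulties here beyond careful bookkeeping of the degree-$3$, degree-$2$, and degree-$1$ coefficients.

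The main obstacle will be condition (1), the pairwise coprimality of $\lambda_1, \lambda_2, \lambda_3$. I would handle the three pairs separately. For $\gcd(\lambda_1, \lambda_2) = \gcd(a, 2a-1)$: since $2a - 1 = 2 \cdot a - 1$, any common divisor of $a$ and $2a-1$ divides $1$, so these are coprime regardless of parity. For $\gcd(\lambda_1, \lambda_3) = \gcd(a, 2a^2 - a - 2)$: reducing $2a^2 - a - 2$ modulo $a$ gives $-2$, so $\gcd(a, \lambda_3) = \gcd(a, 2)$, which equals $1$ precisely because $a$ is odd — this is where the parity hypothesis on $\lambda_1$ is essential. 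For $\gcd(\lambda_2, \lambda_3) = \gcd(2a-1, 2a^2 - a - 2)$: I would reduce $\lambda_3$ modulo $\lambda_2$, for instance by writing $2a^2 - a - 2 = a(2a-1) - 2 = \lambda_2 \cdot a - 2$, so $\gcd(\lambda_2, \lambda_3) = \gcd(2a-1, 2)$, which is $1$ since $2a - 1$ is odd.

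Having confirmed all three conditions together with the ordering, the triple $(a, 2a-1, 2a^2 - a - 2)$ is a good triple for every odd $a \geq 5$, which is exactly the claim. The only conceptual point worth flagging is that the oddness of $\lambda_1$ enters through the $\gcd(\lambda_1, \lambda_3)$ computation; everything else follows from elementary manipulations, so the proof is essentially a verification and the phrase \emph{can be verified directly} in the statement is justified.
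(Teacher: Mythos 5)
Your verification is correct and complete: the ordering, the coprimality of all three pairs (with the oddness of $\lambda_1$ entering exactly where you flag it), the polynomial identity for condition (2), and condition (3) all check out. The paper itself offers no proof beyond the remark that the claim \qq{can be verified directly}, and your argument is precisely that direct verification.
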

\noindent Next, we show that good triples yield examples of simplices satisfying our need. So the next proposition completes the proof of \ref{thm:main}.
\begin{proposition}\label{r:goodtriple}
	Let $\la = (\lambda_1, \lambda_2, \lambda_3)$ be a good triple. Then $Q(\la)$ is not normal and every hole has lattice distance at least $\lambda_1 + 2$ over $\Fla$.
\end{proposition}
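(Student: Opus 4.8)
The plan is to reduce everything to a single linear form and a single auxiliary lattice point. Write $L = \lcm{\lambda_1,\lambda_2,\lambda_3} = \lambda_1\lambda_2\lambda_3$ (the $\lambda_i$ are pairwise coprime by (1)), and for a spatial vector $y=(y_1,y_2,y_3)$ put $S(y) = \sum_{i=1}^{3}\tfrac{L}{\lambda_i}y_i$, so that $\sla{(y,d)} = dL - S(y)$. A point $(y,d)$ with $y\ge 0$ lies in $\satQl$ iff $\sla{(y,d)}\ge 0$, and it is a generator iff moreover $d=1$; consequently a degree-two point $z=(y,2)\in\satQl$ lies in $Q(\la)$ iff it is a sum of two generators, that is, iff there is a lattice point $a$ with $0\le a\le y$ and $S(a)\in[\,S(y)-L,\,L\,]$, a window of width $\sla{z}$. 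First I would extract the arithmetic meaning of (2): reducing $\sla{\de}=2$ modulo $\lambda_1,\lambda_2,\lambda_3$ yields $\tfrac{L}{\lambda_1}\equiv 2$, $\tfrac{L}{\lambda_2}\equiv -1$, $\tfrac{L}{\lambda_3}\equiv 2$ in the respective rings (forcing $\lambda_1$ and $\lambda_3$ odd). With coprimality these make $y\mapsto S(y)\bmod L$ a bijection from the box $B=\prod_i\{0,\dots,\lambda_i-1\}$ onto $\ZZ/L$, so a point of $\satQl$ whose coordinates lie in $B$ is pinned by its residue $S(y)\bmod L$ and its degree. A short computation from (2) shows that $p:=\bigl(\tfrac{\lambda_1-1}{2},\,1,\,\tfrac{\lambda_3-1}{2}\bigr)$ has $S(p)=L-1$; this point carries the lower bound.

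For the lower bound I would prove that every $z\in\satQl$ with $\sla{z}=m\le\lambda_1+1$ lies in $Q(\la)$. Repeatedly subtracting a vertex $\vv_i$ whenever $\sigma_i(z)\ge\lambda_i$ keeps $z$ in $\satQl$, preserves $\sla{z}$ and every other height, and allows $z$ to be recovered by adding the vertices back; so it suffices to treat the resulting \emph{core} $\tilde z$, whose coordinates lie in $B$. The estimate $S(\tilde z)\le 3L-\sum_i\tfrac{L}{\lambda_i}<3L-m$ forces $\tilde z$ to have degree $\le 2$, and degree-one cores are generators. A degree-two core has $S(\tilde z)=2L-m$, and the congruences together with $m<\lambda_2$ pin it to $\tilde z=\bigl(\lambda_1-\tfrac m2,\,m,\,\lambda_3-\tfrac m2\bigr)$ with $m$ even (odd values $m\le\lambda_1$ instead give degree-one cores). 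Since $S(p)=L-1$ lies in the width-$m$ window, it only remains to note $p\le\tilde z$, whose binding inequality $\tfrac{\lambda_1-1}{2}\le\lambda_1-\tfrac m2$ holds exactly when $m\le\lambda_1+1$. Hence $\tilde z$ splits and no hole has skew-height at most $\lambda_1+1$.

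For non-normality I would exhibit the boundary point $h:=\bigl(\lambda_1-1,\;\lambda_1+2,\;\tfrac{\lambda_3-\lambda_1-2}{2},\;2\bigr)$. Condition (3) gives $\lambda_1+2<\lambda_2$, so $h$ has coordinates in $B$, and a direct expansion using (2) gives $S(h)=2L-(\lambda_1+2)$; thus $h\in\satQl$ with $\sla{h}=\lambda_1+2$. To see that $h$ is a hole I must show no $a$ with $0\le a\le h$ satisfies $S(a)\in[\,L-(\lambda_1+2),\,L\,]$. The device is to reduce modulo $\lambda_3$: as $\tfrac{L}{\lambda_1},\tfrac{L}{\lambda_2}\equiv 0$ and $\tfrac{L}{\lambda_3}\equiv 2\pmod{\lambda_3}$, writing $S(a)=L-j$ with $0\le j\le\lambda_1+2$ determines $a_3\equiv-j\,2^{-1}\pmod{\lambda_3}$, and the constraint $a_3\le\tfrac{\lambda_3-\lambda_1-2}{2}$ leaves only $j=0$ and $j=\lambda_1+2$. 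In the case $j=0$ we get $a_3=0$ and $\lambda_2 a_1+\lambda_1 a_2=\lambda_1\lambda_2$, which by $\gcd(\lambda_1,\lambda_2)=1$ forces $a_1=0$, $a_2=\lambda_2$; in the case $j=\lambda_1+2$ we get $a_3=\tfrac{\lambda_3-\lambda_1-2}{2}$ and $\lambda_2(a_1+1)+\lambda_1 a_2=\lambda_1(\lambda_1+2)$, forcing $a_1=\lambda_1-1$, $a_2=\lambda_1+2-\lambda_2<0$. Both violate $a_2\le h_2=\lambda_1+2<\lambda_2$, so no such $a$ exists. Thus $h$ is a hole of skew-height exactly $\lambda_1+2$; with the previous paragraph this proves $Q(\la)$ is not normal and that every hole sits at lattice distance at least $\lambda_1+2$ above $\Fla$.

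I expect the third paragraph to be the crux. Reducing modulo $\lambda_3$ cleanly collapses the three-variable membership question to the two residues $j\in\{0,\lambda_1+2\}$, but one still has to run both planar Diophantine analyses and verify that it is precisely condition (3) that eliminates each of them—this is exactly where the sharp constant $\lambda_1+2$ is produced. The mirror-image computation in the lower bound, that $p\le\tilde z$ is equivalent to $m\le\lambda_1+1$, is the same tightness seen from below.
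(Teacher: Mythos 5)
Your proposal is correct and follows essentially the same route as the paper: your splitting point $p$, your degree-two box representatives $(\lambda_1-\frac m2,\, m,\, \lambda_3-\frac m2,\, 2)$, and your hole $h$ are exactly the paper's $\pb$, $2\pb+k\de$, and $\qb$, and your CRT bijection from the box onto $\ZZ/L$ is the paper's uniqueness lemma. The only cosmetic difference is in the non-normality step, where the paper rules out a splitting via the parity of $\sigma_{\la}$ on a hypothetical degree-one summand, while you reduce modulo $\lambda_3$ and dispose of the two resulting Diophantine cases; both arguments use condition (3) in the same way to produce the sharp constant $\lambda_1+2$.
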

\noindent We prepare two lemmata before we prove this proposition.
\begin{lemma}\label{lem:eind}
	Let $\lambda_1, \dotsc, \lambda_n$ be pairwise coprime. For every positive integer $s > 0$, there exists at most one element $\qb \in \satQl$ with $\sla{\qb} = s$ and $\sigma_i(\qb) < \lambda_i$ for every $1 \leq i \leq n$.
\end{lemma}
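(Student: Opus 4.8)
The plan is to reduce the uniqueness claim to a counting problem for the first $n$ coordinates, solved by the Chinese Remainder Theorem. First I would record exactly which elements of $\ZZ^{n+1}$ lie in $\satQl$. Since $\ZZ Q = \ZZ^{n+1}$ and the cone $\RR_{\geq 0} Q$ over the simplex $\Delta(\la)$ is simplicial with the $n+1$ facets $F_1, \dotsc, F_n$ and $\Fla$, an element $\qb \in \ZZ^{n+1}$ lies in $\satQl = \ZZ^{n+1} \cap \RR_{\geq 0} Q$ if and only if all of its lattice heights are non-negative, i.e. $\sigma_i(\qb) = q_i \geq 0$ for $1 \leq i \leq n$ and $\sla{\qb} \geq 0$. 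Combined with the hypothesis $\sigma_i(\qb) < \lambda_i$, this confines the first $n$ coordinates to the box $0 \leq q_i \leq \lambda_i - 1$.

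Next I would observe that the last coordinate is not free. From $\sla{\qb} = L\, q_{n+1} - \sum_{j=1}^n \tfrac{L}{\lambda_j} q_j = s$ with $L = \lcm{\lambda_1, \dotsc, \lambda_n}$, the value $q_{n+1} = \tfrac{1}{L}\bigl(s + \sum_j \tfrac{L}{\lambda_j} q_j\bigr)$ is uniquely determined as a rational number once $q_1, \dotsc, q_n$ and $s$ are fixed. Hence it suffices to show that at most one tuple $(q_1, \dotsc, q_n)$ in the box yields an \emph{integral} $q_{n+1}$, and integrality is equivalent to the single congruence $\sum_{j=1}^n \tfrac{L}{\lambda_j} q_j \equiv -s \pmod{L}$.

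The key step, where the pairwise coprimality enters, is to decouple this congruence. Since the $\lambda_i$ are pairwise coprime we have $L = \prod_i \lambda_i$ and $\ZZ/L \cong \prod_i \ZZ/\lambda_i$. Reducing the congruence modulo a fixed $\lambda_i$ annihilates every term with $j \neq i$, because $\tfrac{L}{\lambda_j} = \prod_{m \neq j} \lambda_m$ is a multiple of $\lambda_i$ whenever $j \neq i$; this leaves $\tfrac{L}{\lambda_i} q_i \equiv -s \pmod{\lambda_i}$. As $\tfrac{L}{\lambda_i} = \prod_{m \neq i} \lambda_m$ is coprime to $\lambda_i$, it is invertible modulo $\lambda_i$, so this residual congruence pins down $q_i$ uniquely in the range $0 \leq q_i < \lambda_i$. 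Thus the whole tuple $(q_1, \dotsc, q_n)$, and with it $q_{n+1}$ and hence $\qb$, is uniquely determined, which is the assertion.

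I do not expect a genuine obstacle here: the argument is essentially a CRT computation. The only point requiring care is the first step, namely that $\satQl$ is cut out exactly by the non-negativity of the $n+1$ lattice heights, which rests on the identity $\ZZ Q = \ZZ^{n+1}$ recorded in Section 2 and on the simpliciality of the cone. Everything after that is bookkeeping in $\ZZ/\lambda_i$.
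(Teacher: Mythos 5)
Your argument is correct and in substance the same as the paper's: the paper's one-line proof observes that $\ker \sigma_\la$ is generated as a group by $\vv_1, \dotsc, \vv_n$, which is exactly the statement that the congruence $\sum_j \tfrac{L}{\lambda_j} q_j \equiv -s \pmod{L}$ pins down each $q_i$ modulo $\lambda_i$ — you simply carry out that reduction on the coordinates directly rather than on the difference of two candidate solutions. Your preliminary description of $\satQl$ by non-negativity of the $n+1$ lattice heights is the right justification for the needed bound $q_i \geq 0$, so there is no gap.
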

\begin{proof}
	 This follows easily from the observation that $\ker \sigma_\la$ is generated as a group by $\vv_1, \dotsc, \vv_n$.
\end{proof}
We note that the proof of \ref{lem:eind} is inspired by the proof of Proposition 1.3 in \cite{BGrect}.
\begin{lemma}\label{lem:1nor}
	Let $\lambda_1, \dotsc, \lambda_n$ be pairwise coprime and let $s$ be a positive integer.
	Assume that for every positive integer $t \leq s$, there exists an element $\pb_t \in Q(\la)$ with $\sla{\pb_t} = t$ and $\sigma_i(\pb_t) < \lambda_i$ for every $i$.
	Then every hole $\qb \in \satQl \setminus Q(\la)$ has $\sla{\qb} > s$.
\end{lemma}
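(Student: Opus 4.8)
The plan is to reduce an arbitrary hole into the ``fundamental box'' $\set{z \with 0 \le \sigma_i(z) < \lambda_i \text{ for all } i}$ without changing its height above $\Fla$, and then to invoke the uniqueness statement of \ref{lem:eind}.

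First I would record a description of the normalization. Since $\ZZ Q = \ZZ^{n+1}$ and the facets of the cone $\RR_{\geq 0} Q$ are precisely $F_1, \dotsc, F_n$ and $\Fla$, an element $z \in \ZZ^{n+1}$ lies in $\satQl$ exactly when all its facet heights are non-negative, i.e. $\sigma_i(z) \ge 0$ for $1 \le i \le n$ and $\sla{z} \ge 0$. The key observation is that each vertex generator $\vv_i = \lambda_i \eb_i + \eb_{n+1}$ lies in $Q(\la)$ and satisfies $\sla{\vv_i} = 0$, while $\sigma_i(\vv_i) = \lambda_i$ and $\sigma_j(\vv_i) = 0$ for $j \ne i$.

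Next comes the reduction step, which I expect to be the crux. Given a hole $\qb$, whenever some coordinate satisfies $\sigma_i(\qb) \ge \lambda_i$ I subtract $\vv_i$: this keeps $\sigma_i \ge 0$, leaves all other $\sigma_j$ and $\sla$ unchanged, and stays in $\ZZ^{n+1}$, so by the description above the result is still in $\satQl$; moreover $\sum_i \sigma_i$ strictly decreases, so the process terminates at some $\qb' \in \satQl$ with $\sigma_i(\qb') < \lambda_i$ for all $i$ and $\sla{\qb'} = \sla{\qb}$. The important point is that $\qb = \qb' + \sum_i c_i \vv_i$ with $c_i \in \NN$ and each $\vv_i \in Q(\la)$; hence if $\qb'$ were in $Q(\la)$ then so would $\qb$ be. As $\qb$ is a hole, $\qb'$ is a hole as well. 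This is the step where one must be careful that the reduction uses genuine semigroup generators, so that ``hole-ness'' is preserved when passing back up.

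Finally I would conclude. If $\sla{\qb'} = 0$, then $\qb' \in \ker \sigma_\la = \ZZ\vv_1 + \dotsb + \ZZ\vv_n$, and writing $\qb' = \sum_i a_i \vv_i$ yields $\sigma_i(\qb') = a_i \lambda_i \in [0, \lambda_i)$, forcing every $a_i = 0$ by the coprimality argument underlying \ref{lem:eind}; thus $\qb' = \mathbf{0} \in Q(\la)$, contradicting that $\qb'$ is a hole. Hence $\sla{\qb'} = t$ for some integer $t \ge 1$. Suppose toward a contradiction that $t \le s$. By hypothesis there is $\pb_t \in Q(\la)$ with $\sla{\pb_t} = t$ and $\sigma_i(\pb_t) < \lambda_i$ for all $i$; since $\pb_t \in \satQl$ as well, the uniqueness in \ref{lem:eind} gives $\qb' = \pb_t \in Q(\la)$, again contradicting that $\qb'$ is a hole. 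Therefore $\sla{\qb} = \sla{\qb'} = t > s$, as claimed.
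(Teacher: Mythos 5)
Your proof is correct and follows essentially the same route as the paper: reduce the hole into the box $\sigma_i < \lambda_i$ by subtracting the generators $\vv_i$, then apply the uniqueness of \ref{lem:eind} against the given $\pb_t$. The only difference is that you spell out the details the paper leaves implicit (termination of the reduction, preservation of hole-ness, and the edge case $\sla{\qb'}=0$), which is a welcome but not substantively different elaboration.
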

\begin{proof}
	We may assume that $\sigma_i(\qb) < \lambda_i$ for every $i$, because otherwise we can subtract $\vv_i$.
	Now the claim is immediate from the preceding \ref{lem:eind}.
\end{proof}
\begin{proof}[Proof of \ref{r:goodtriple}]
	First, we show that both $\lambda_1$ and $\lambda_3$ are odd.
	For this assume to the contrary that $\lambda_1 = 2 \lambda_1'$ for an integer $\lambda_1'$.
	Then $\lambda_2 \lambda_3 = 2(1+2\lambda_1' \lambda_3 - \lambda_1' \lambda_2)$, thus either $\lambda_2$ or $\lambda_3$ are even, violating the coprimeness assumption.
	The proof that $\lambda_3$ is odd is analogous.

	Next, consider the vector 
	\[
	\pb \defa \frac{1}{2}(\vv_1 + \vv_3 + \de)
	  = \left(\frac{\lambda_1-1}{2},\, 1,\, \frac{\lambda_3 - 1}{2},\, 1 \right) .
	\]
	It follows from $\lambda_1$ and $\lambda_3$ odd and $\sla{\de} = 2$ that $\pb \in Q(\la)$ and  $\sla{\pb} = 1$.

	For $0 \leq k \leq \frac{\lambda_1 - 1}{2}$ it holds that $\pb + k \de \in Q(\la)$ and $\sla{\pb+k\de} = 1 + 2k$.
	Moreover, $\sigma_i(\pb+k\de) \leq \lambda_i$ for $i=1,2,3$.
	Further, it holds that $2 \pb + k \de \in Q(\la)$, $\sla{2\pb+k\de} = 2 + 2k$ and $\sigma_i(2\pb+k\de) \leq \lambda_i$ for $i=1,2,3$. For the last statement with $i=2$, we use that $\lambda_1+1 < \lambda_2$. Thus, we apply \ref{lem:1nor} with the vectors $\pb+k \de$ and $2 \pb + k \de$ for $0 \leq k \leq \frac{\lambda_1 - 1}{2}$ to conclude that there exists no hole with lattice height strictly less than $\lambda_1 + 2$ above $\Fla$.

	\noindent Let
	\[
	 \qb \defa \pb + \left(\frac{\lambda_1-1}{2}+1\right)\de + \vv_1
	   = (\lambda_1 - 1,\, \lambda_1 + 2,\, \frac{\lambda_3 - \lambda_1}{2} - 1 ,\, 2) \,.
	\]
	The components of $\qb$ are non-negative integers and $\sla{\qb} = \lambda_1 + 2$, hence $\qb \in \sat{Q}(\la)$. We claim that $\qb \notin Q(\la)$. This clearly implies that $Q(\la)$ is not normal.
	So assume that $\qb = \qb_1 + \qb_2$ for $\qb_1, \qb_2 \in Q(\la)$. Since $\lambda_1, \lambda_2$ and $\lambda_3$ are pairwise coprime, the only elements of $Q(\la)$ in $\Fla$ of degree $1$ are $\vv_1, \vv_2$ and $\vv_3$.
	But $\lambda_1 -1 < \lambda_1$, $\lambda_1 + 2 < \lambda_2$ (by assumption) and $\frac{\lambda_3 - \lambda_1}{2} - 1 < \lambda_3$, so $\qb - \vv_i$ has a negative component for $i=1,2,3$.
	It follows that $\sla{\qb_1}, \sla{\qb_2} > 0$. Since $\sla{\qb} = \lambda_1 + 2$ is odd, one of $\sla{\qb_1}$ and  $\sla{\qb_2}$ is odd, too, say $\sla{\qb_1}$. 
	By \ref{lem:eind}, all elements $\vv$ of $Q(\la)$ of degree $1$ with $\sla{\vv} \leq \lambda_1$ and $\sla{\vv}$ odd are of the form $\pb + k \de$ for $0 \leq k \leq \frac{\lambda_1 - 1}{2}$.
	But $\qb - (\pb + k \de) = \vv_1 + \left(\frac{\lambda_1 - 1}{2} +1 - k\right) \de$ has a negative third component.
	Thus $\qb$ cannot be written as a sum of elements of degree $1$ in $Q(\la)$.
\end{proof}

\bibliographystyle{abbrv}
\bibliography{RandAbstand}
\end{document}